\newtheorem{theorem}{Theorem}
\newtheorem{lemma}[theorem]{Lemma}
\newtheorem{corollary}[theorem]{Corollary}
\theoremstyle{definition}
\newtheorem{definition}{Definition}
\newtheorem{example}{Example}
\newtheorem{strategy}{Strategy}
\newenvironment{customstr}[1]
  {\innercustomstr}
  {\endinnercustomstr}
\title{Weighing Coins and Keeping Secrets}
\author{Nicholas Diaco \and Tanya Khovanova}
\begin{document}
\maketitle

\begin{abstract}
In this expository paper we discuss a relatively new counterfeit coin problem with an unusual goal: maintaining the privacy of, rather than revealing, counterfeit coins in a set of both fake and real coins. We introduce two classes of solutions to this problem --- one that respects the privacy of all the coins and one that respects the privacy of only the fake coins --- and give several results regarding each. We describe and generalize 6 unique strategies that fall into these two categories. Furthermore, we explain conditions for the existence of a solution, as well as showing proof of a solution's optimality in select cases. In order to quantify exactly how much information is revealed by a given solution, we also define the revealing factor and revealing coefficient; these two values additionally act as a means of comparing the relative effectiveness of different solutions. Most importantly, by introducing an array of new concepts, we lay the foundation for future analysis of this very interesting problem, as well as many other problems related to privacy and the transfer of information.
\end{abstract}

\section{Introduction}

In 2007 Alexander Shapovalov suggested an unusual coin weighing problem for the sixth international Kolmogorov math tournament \cite{TurKolm}.

\begin{quote}
A judge is presented with 80 coins that all look the same, knowing that there are either two or three fake coins among them. All the real coins weigh the same and all the fake coins weigh the same, but the fake coins are lighter than the real ones.

A lawyer knows that there are exactly three fake coins and which ones they are. The lawyer must use a balance scale to convince the judge that there are exactly three fake coins. She is bound by her contract to not reveal any information about any particular coin. How should she proceed?
\end{quote}

Why is this problem so unusual? Let's take a look back at history. The first coin weighing problems appeared around 1945 \cite{Newbery, Schwartz}. In all of them, the goal was simply to find a single fake coin amongst many real coins. After that, many generalizations followed: newer versions of the counterfeit coin problem included finding multiple fake coins, differentiating between coins of arbitrary weights, and so on. All of them, however, had the additional goal of minimizing the number of weighings necessary to locate the fake coin(s); see \cite{GN} and its many references.

Shapovalov's puzzle is the first problem to switch the attention to maintaining the privacy of coins rather than eliminating it. This puzzle is very important and modern; like many other ``coin weighing'' problems, it is not about coins---rather, it uses coins to model ideas and create a simplified version of real life privacy problems and their potential solutions.

\section{A Simplified Version}\label{simp}

Let us consider a simpler version of Shapovalov's puzzle to get our feet wet:

\begin{example}\label{ex:80-2-1}  

A lawyer presents 80 identical coins to a judge, who knows that among them there are either one or two fake coins. All the real coins weigh the same and all the fake coins weigh the same, but the fake coins are lighter than the real ones.

The lawyer knows that there are exactly two fake coins and which ones they are. Can the lawyer use a balance scale to convince the judge that there are exactly two fake coins without revealing any information about any particular coin?
\end{example}

We will offer several strategies for different versions of this puzzle, so we would like to number them. Here is the first strategy to solve Example~\ref{ex:80-2-1}:

\begin{strategy}\label{str:1}
The lawyer divides the coins into two piles of 40 so that each pile contains exactly one fake coin. She then puts the piles in the different pans of the scale.
\end{strategy}

The scale will balance, which means that the number of fake coins is the same in both of the pans. Therefore, the total number of fake coins is even, and in this case is exactly 2. For any particular coin, the judge can't definitively say whether it is real or fake; we have thus succeeded in our task.

Let us introduce some notation before we move forward. We will denote the \textbf{t}otal number of coins by $t$, the actual number of \textbf{f}ake coins by $f$, and the number of fake coins that we are trying to \textbf{d}isprove by $d$.

We would also like to give a name to a strategy or a series of weighings after which the judge knows nothing about the authenticity of any specific coin. Knop \cite{Knop} suggests that such strategies be called \textit{shapovalov} in honor of the puzzle's designer. One of the authors \cite{TK} uses the name \textit{unrevealing}. We do not like the name ``unrevealing'' as we plan to show that all strategies do reveal some information. We like the name ``shapovalov,'' but we also want to have a descriptive name.

\begin{definition}
We will call a set of weighings or a strategy where no information about any particular coin is revealed \textit{discreet}. Otherwise, we call the set of weighings \textit{indiscreet}.
\end{definition}

For the time being, we are only concerned with discreet (shapovalov) strategies. We will refer to a given example with the set of three numbers $t$-$f$-$d$. For example, we will refer to Example~\ref{ex:80-2-1} as Example~\ref{ex:80-2-1} (80-2-1).

\begin{customstr}{1*}\label{str:1*}
The lawyer divides the coins into $f$ equal groups with one fake coin in each and shows that all of them are equal in weight.
\end{customstr}

Later, we will see that this strategy can be adjusted to a more general case when $f$ and $t$ have a common divisor that doesn't divide $d$.

Now we can come back to the original puzzle (80-3-2) and discuss its solution.

\section{Solutions to the Original Problem}

Motivated by the strategy used in the previous example, the lawyer can try to divide 80 coins into three groups, each containing one fake coin. Clearly, 80 is not divisible by 3, so she makes the three largest possible groups of the same size: each of 26 elements with one fake coin. The lawyer uses two weighings on the scale to demonstrate to the judge that all three groups have the same weight. What can the judge conclude? He can conclude that either there are 3 fake coins---one in each group, or there are 2 fake coins and they are in the leftover group of 2 coins. Unfortunately, this strategy is not good enough to prove to the judge that there are exactly 3 fake coins. The lawyer decides not to give up and adjusts the strategy in the following manner:

\begin{strategy}\label{str:2} (80-3-2)

The lawyer starts by showing that the three groups of 26 coins, containing one fake coin each, have the same weight. She continues by comparing one of the coins in the leftover group to a real coin not in the leftover group.
\end{strategy}

In this case, the judge can deduce that one of the leftover coins can balance against a coin from one of the larger groups only when the leftover group does not contain fake coins. Therefore, there must be 3 fake coins. The lawyer proved just what she wanted, and we should be done---but wait, the strategy is indiscreet! After our set of weighings, the judge will know that both the two leftover coins and one of the coins we used in the last weighing are real. Although the lawyer proved that there must be three fake coins, in the process she violated the privacy of three real coins.

The strategies in this section are adapted from Knop's paper \cite{Knop} (in Russian), where he provides different solutions for (100-3-2) problem. Let us suggest another solution.

\begin{strategy}\label{str:3} (80-3-2)

The lawyer divides all coins into four piles: $A$, $B$, $C$, and $D$ with 20 coins each, making sure that piles $A$, $B$, and $C$ all contain one fake coin. She then conducts three weighings comparing each of $A$, $B$, and $C$ to $D$, in the process showing that each of $A$, $B$, and $C$ is lighter than $D$. 
\end{strategy}

Therefore, each of $A$, $B$, and $C$ has to contain a fake coin. Once again, the lawyer successfully proves to the judge that there are 3 fake coins. Unfortunately, the strategy is still indiscreet because the lawyer violates the privacy of all 20 coins in the $D$ pile.

Now we will present the official solution from the competition.

\begin{strategy}\label{str:4} (80-3-2)

The lawyer divides all coins into 5 piles: $A$ and $B$ with 10 coins each, and $C$, $D$, and $E$ with 20 coins each, so that the three fake coins are in piles $A$, $D$, and $E$. The lawyer then conducts three weighings. In the first, she compares $A + C$ against $B + D$, and the scale balances. In the second weighing she compares $A + B$ against $E$, and the scale balances again. In the last weighing she compares $C + D$ against $A + B + E$, and shows that the second pan is lighter.
\end{strategy}

Let us analyze this strategy. The third weighing demonstrates that $A + B + E$ must contain some fake coins. The second weighing shows the judge that the number of fake coins in $A+B+E$ is even; this means that the pile $A+B+E$ has exactly 2 fake coins, one in $E$ and the other in $A+B$. In this case $A+B+C+D$ clearly has fake coins, and because of the first weighing the number of them is even. Therefore, $C+D$ has a fake coin. We can then conclude that the total number of fake coins is 3. What does the judge know about the individual coins? One coin is in $E$, one in $A+B$ and one in $C+D$. In addition, if $A$ contains a fake coin, then $D$ contains the other one. If $B$ contains a fake coin, then $C$ has the other one. The privacy of every individual coin is preserved and the strategy is discreet.

We now offer one more discreet (shapovalov) strategy for this problem.

\begin{strategy}\label{str:5} (80-3-2)

The lawyer divides all the coins into nine piles: $A_1$, $B_1$, $C_1$, $A_2$, $B_2$, $C_2$, $A_3$, $B_3$ and $C_3$ of sizes 24, 1, 2, 24, 1, 2, 23, 2, and 1 respectively. The lawyer demonstrates that $A_1+B_1 = A_2+B_2 = A_3+B_3$. Additionally, she shows that $B_1+C_1= B_2+C_2 = B_3+C_3$.
\end{strategy}

The judge can easily see that if $A_1+B_1$ contains a fake coin, then this pile contains exactly one fake coin and the total number of fake coins is 3. If $A_1+B_1$ doesn't contain a fake coin, then all fake coins must be concentrated in $C_1+C_2+C_3$. Similarly, if $B_1+C_1$ contains a fake coin, then this pile contains exactly one fake coin and the total number of fake coins is 3. If $B_1+C_1$ doesn't contain a fake coin, then all fake coins must be concentrated in $A_1+A_2+A_3$. If both $A_1+B_1$ and $B_1+C_1$ contain a fake coin, then there must be 3 fake coins in the triplet $B_1+B_2+B_3$. Summarizing gives us three different ways for the fake coins to be distributed:

\begin{enumerate}
\item one fake coin in one of each: $A_1$, $A_2$, $A_3$ (sizes 24, 24, 23)
\item one fake coin in one of each: $B_1$, $B_2$, $B_3$ (sizes 1, 1, 2)
\item one fake coin in one of each: $C_1$, $C_2$, $C_3$ (sizes 2, 2, 1)
\end{enumerate}

In all cases, we have ruled out the possibility of there being two fake coins, and no coin in particular has its identity revealed.

See more examples of both insufficient and correct solutions in Knop's article \cite{Knop} (in Russian).

\section{Discreet Coin Weighings}

The original puzzle is tricky, but we've already managed to demonstrate two solutions. Is it always possible to find a solution that respects the privacy of each individual coin? Or, in our new definition, is it always possible to find a discreet set of weighings?

Let us point out the trivial fact that if $f=0$ or $f=t$ (and thus the lawyer has to prove to the judge that all coins are real/fake) the privacy of every coin is guaranteed to be violated as a result of the statement being proven.

In order to prevent the identity of any given coin from being revealed, we will only consider the cases for which $0 < f < t$, and thus the statement that we are trying to prove is itself discreet. However, as the following lemmas prove, it is not always possible to have a discreet set of weighings in this case.

\begin{lemma}\label{thm:f=1}
For $t>1$ and $f=1$ it is impossible to have a discreet strategy.
\end{lemma}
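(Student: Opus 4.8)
The plan is to argue by contradiction. Suppose a discreet strategy exists for some total $t>1$ with $f=1$ and some rival count $d\neq 1$ that the judge entertains. I will first reformulate discreetness in a way tailored to a single fake coin, then show that the only weighings compatible with discreetness are exactly the ones that are powerless to rule out the rival count $d$.

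First I would pin down what discreetness means when $f=1$. A configuration consistent with the experiment is just a choice of which coin is the (single) fake. For a fixed coin $c$, the judge can conclude ``$c$ is real'' precisely when \emph{no} consistent configuration puts the fake at $c$; so to keep $c$'s status hidden we need some consistent configuration with the fake at $c$. Since there are $t>1$ coins and at least two such configurations, the complementary requirement (that the judge cannot conclude ``$c$ is fake'') is then automatic. Hence discreetness for $f=1$ is equivalent to the following \emph{position-blindness} property: the observed sequence of weighing outcomes (the transcript) must be reproducible by placing the single fake at \emph{any} of the $t$ positions. Establishing this equivalence is the first key step.

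Next I would analyze one weighing under position-blindness. Write $w$ for the weight of a real coin and $w-\delta$ for a fake, with $0<\delta<w$, and let a weighing compare pans $L$ and $R$. If $|L|=|R|$ with both pans nonempty, then the single fake tilts the balance toward whichever pan does \emph{not} contain it (and leaves it level if the fake is off the scale); the three outcomes ``left lighter,'' ``right lighter,'' and ``balanced'' are mutually exclusive, yet a position in $L$ forces the first and a position in $R$ forces the second, so no single observed outcome is reproducible by all positions. Thus any equal-size weighing with both pans nonempty leaks the status of some coin, and a discreet $f=1$ strategy can only use weighings with $|L|\neq|R|$. For such a weighing, say $|L|>|R|$, the left–right weight difference equals $(|L|-|R|)w$ minus at most one $\delta$, and since $(|L|-|R|)w\ge w>\delta$ the larger pan is heavier \emph{regardless} of where the single fake sits. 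So the transcript of a discreet $f=1$ strategy consists entirely of ``larger pan is heavier'' outcomes.

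Finally I would kill the count. For a weighing with $|L|>|R|$ and \emph{any} placement of fakes, the weight deficit of a side is at most $d\,\delta$, while the size term $(|L|-|R|)w$ is at least $w$; hence for all sufficiently small $\delta$ the larger pan is still heavier. Consequently a hypothetical $d$-fake configuration, run with a small enough weight gap, reproduces exactly the same ``larger-pan-heavier'' outcome on every weighing --- and since all outcomes are forced, this configuration even follows the identical adaptive path if the strategy is adaptive. Therefore the judge cannot distinguish the $d$-fake hypothesis from $f=1$, so the strategy fails to prove the count, contradicting the assumption. I expect the main obstacle to be the interpretation of ``consistent'' under unknown weights (making precise the ``choose $\delta$ small'' argument) together with an airtight equal-size case analysis covering balance, both tilts, empty pans, and adaptivity; once position-blindness is correctly formalized, the rest is the dominance estimate $(|L|-|R|)w>\delta$ and its configuration-wide analogue.
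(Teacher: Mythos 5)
Your proposal is correct, and its core engine is the same observation the paper uses: with a single light fake, an equal-pan weighing that balances certifies every coin on the scale as real, and one that tilts certifies the heavier pan as real, so any such weighing is indiscreet. The paper's proof is just that one dichotomy, stated in three sentences, and it implicitly assumes every weighing puts equal numbers of coins on the two pans (its claim that ``the heavier pan has only real coins'' is false for unequal pans, where the larger pan is heavier no matter where the fake sits). Your proof goes further in two ways: you explicitly formalize discreetness for $f=1$ as position-blindness of the transcript, and you handle the unequal-pan case by showing such weighings, while harmless to privacy, are also powerless --- a $d$-fake configuration with a sufficiently small weight gap reproduces the same forced outcomes, so a strategy built only from them cannot establish the count. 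That second branch is what turns the paper's sketch into an airtight argument: you derive a contradiction either from indiscreetness (equal pans) or from failure to prove $f=1$ (unequal pans), rather than assuming the convenient case. The cost is length and the need to be careful about what ``consistent'' means when $\delta$ is unknown, which you flag and resolve correctly; the benefit is that your version covers adaptive strategies and arbitrary pan sizes without any unstated conventions.
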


\begin{proof}
Suppose such a strategy exists and the lawyer convinces the judge that the total number of fake coins is 1. Now consider any weighing that is carried out. If it is balanced, then the coins on both pans are all necessarily real. If it is not balanced, then we know that the heavier pan has only real coins. In either case some of the coins are revealed to be genuine, and thus the strategy is indiscreet.
\end{proof}

We use symmetry to prove the following lemma.

\begin{lemma}\label{lem:f=t-1}
For $t>1$ and $f=t-1$ it is impossible to have a discreet strategy.
\end{lemma}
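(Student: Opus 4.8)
The plan is to exploit the duality between fake and real coins. When $f=t-1$ there is exactly one real coin, and this lone real coin plays precisely the role that the lone fake coin played in Lemma~\ref{thm:f=1}; the only structural change is that the distinguished coin is now the \emph{heavy} one rather than the light one. So I would relabel the problem by swapping the words \emph{real} and \emph{fake} and, simultaneously, swapping \emph{lighter} and \emph{heavier}. Under this relabeling the configuration becomes one distinguished (heavy) coin among $t-1$ ordinary (light) coins, i.e. structurally the $f=1$ situation, and the task reduces to showing that any weighing must reveal the true status of at least one coin.

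The core step then mirrors the proof of Lemma~\ref{thm:f=1} almost verbatim, with the direction of the scale reversed. Consider any weighing that compares equally many coins on the two pans. Since there is only one distinguished (real, heavy) coin, it cannot be split between the pans, so the outcome is forced: if the weighing balances, neither pan can contain the lone heavy coin, whence every coin on the scale must be fake; if it does not balance, the heavier pan is exactly the one holding the lone real coin, and the lighter pan consists entirely of fakes. In either case the judge learns the authenticity of some coin, so the strategy is indiscreet. Applying this to every weighing in the strategy shows that no discreet strategy can exist, which is the claim.

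The step that will require the most care --- and the main obstacle to a fully rigorous argument --- is a weighing that places \emph{different} numbers of coins on the two pans. There the balance condition is no longer governed by the distribution of the fakes alone; it also depends on the exact weights of the coins, which the judge does not know a priori. The hard part will be to show that such a weighing cannot rescue discreetness. I would argue as follows: if an unequal weighing balances, the lone real coin cannot be off the scale (the unequal fake counts could not then balance), and a short case check shows it must sit on the pan with \emph{fewer} coins, so every coin on the more numerous pan is revealed to be fake; and if the weighing does not balance, one recovers a reveal on the all-fake side as before. Once these unequal cases are folded into the combinatorial dichotomy of the previous paragraph, the symmetry with Lemma~\ref{thm:f=1} is complete and the lemma follows.
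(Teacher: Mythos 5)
Your proof takes essentially the same route as the paper: the paper's own argument for this lemma is a one-line appeal to the real/fake (heavy/light) symmetry with Lemma~\ref{thm:f=1}, exactly the duality you set up, concluding that some fake coin is necessarily revealed. Your extra discussion of weighings with unequal numbers of coins on the pans is a level of care the paper itself does not take (in either lemma), but it does not change the approach.
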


\begin{proof}
Suppose such a strategy exists. By the same logic shown in Lemma~\ref{thm:f=1}, any such strategy will necessarily reveal 1 fake coin. Thus, the strategy is indiscreet.
\end{proof}

\begin{lemma}
For $f=2$ and $d=0$ it is impossible to have a discreet strategy.
\end{lemma}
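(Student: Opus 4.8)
The plan is to show that any strategy that proves $f=2$ must contain a single weighing which certifies an entire pan of coins to be genuine, so that no discreet strategy can exist. Since the judge knows a priori that $f\in\{0,2\}$, proving $f=2$ is exactly the same as ruling out the all-real configuration $f=0$. The first step is therefore to argue that ruling out $f=0$ forces at least one weighing whose observed result disagrees with the prediction of the all-real hypothesis: if every weighing agreed with ``all coins real,'' then $f=0$ would remain consistent with everything the judge has seen and nothing would be proved. I would name this distinguished weighing $W$, let its pans have sizes $n_L$ and $n_R$, and split into cases according to whether $n_L=n_R$.

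The main case is when $W$ compares two pans of equal size and comes out unbalanced; this is the mechanism already exploited in Lemma~\ref{thm:f=1}. Writing $a$ and $b$ for the number of fake coins on the heavier and the lighter pan respectively, an unbalanced result forces $a<b$ in every configuration consistent with $W$. The key observation---and the whole reason the bound $f\le 2$ is being used---is that $a\ge 1$ would force $b\ge 2$, hence a total of at least three fakes, which is impossible. Thus $a=0$ in every consistent configuration, so every coin on the heavier pan is revealed to be real and the strategy is indiscreet.

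The step I expect to be the main obstacle is closing the escape route in which the lawyer disproves $f=0$ using only weighings with pans of \emph{unequal} size, so that no equal-size weighing ever tips. Here I would use that the counterfeit coins are strictly lighter but of unknown weight, so the judge may conclude only what holds for every admissible pair $0<w_f<w_r$. If the larger pan (say $n_L>n_R$) fails to come out strictly heavier---it balances, or the smaller pan is heavier---then the larger pan must carry enough extra fake weight to overcome its extra genuine coins; writing the weight comparison and using $w_f>0$ (equivalently $w_r-w_f<w_r$), I would show this requires strictly more than $n_L-n_R\ge 1$ excess fakes on the larger pan, hence at least two. With $f\le 2$ this pins both fakes on the larger pan and leaves the smaller pan fake-free in every consistent configuration, so the smaller (nonempty) pan is revealed to be real.

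Finally I would collect the cases. In each one a nonempty pan is certified entirely genuine, contradicting the assumption that the strategy is discreet. Hence for $f=2$ and $d=0$ no discreet strategy exists.
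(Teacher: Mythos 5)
Your proof is correct and follows the same dichotomy as the paper's: either some weighing deviates from the all-real prediction, in which case (using $f \le 2$) an entire pan is certified genuine and the strategy is indiscreet, or every weighing is consistent with $f=0$ and nothing is proved. The only substantive difference is that you explicitly handle pans of unequal size, a case the paper's two-line proof passes over silently (all of its strategies compare equal numbers of coins); this is a welcome bit of extra care but not a different argument.
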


\begin{proof}
If a weighing is not balanced, then the heavier pan must have only real coins. If all the weighings are balanced, the judge can't differentiate between 2 fake coins and 0 fake coins---we end up proving nothing.
\end{proof}

We will later show that the above examples are not the only exceptions to the existence of a discreet strategy, but these next examples are more involved; they are thus produced in Section~\ref{oddity}.

\section{The Revealing Factor and Coefficient}

Let's go all the way back to Strategy~\ref{str:1} (80-2-1), in which the lawyer splits the coins into two groups of 40. Suppose the judge just knows that there are 2 fake coins. What are his chances of finding a single fake coin before the weighing? They are simply 2 in 80. After the weighing the judge knows that there is a fake coin in each group of 40, so his chance of finding one coin is 1 in 40---the same as before. It seems as though no information is revealed, but this is not the case.

It turns out that some information is revealed in the process of weighing the coins. Before the weighings, the two counterfeits can be any of the 80 coins, and the number of equally likely distributions of these fake coins is $\binom{80}{2} = 3160$. After the weighings, there is one fake in each pile of 40, and the number of possibilities is reduced to $\binom{40}{1}^2 =1600$.

We would like to introduce the notions of a revealing factor and a revealing coefficient to quantify this observation. Before the weighings, if the judge knows that the number of fake coins is exactly $f$, then any set of $f$ coins might be the set of fake coins. The number of equally likely possibilities is $\binom{t}{f}$, and we will call this value \textit{old possibilities}. After the weighings, the set of possibilities decreases so that not any arbitrary group of $f$ coins could be the set of fake coins. We call the number of sets of $f$ coins that could be fake after the weighings are done the \textit{new possibilities}.

\begin{definition}
We call the ratio of the number of old possibilities to the new possibilities after a successful strategy the \textit{revealing factor}. We denote it by $X$:
$$X = \frac{\text{\# old possibilities}}{\text{\# new possibilities}}.$$
\end{definition}

We would also like to introduce the notion of a \textit{revealing coefficient} as used in \cite{TK}. The revealing coefficient is the portion of information that is revealed in the process of proving that there are exactly $f$ fake coins. The revealing coefficient is close to 1 if the judge knows the exact location of each fake coin and 0 if he receives no extra information other than that which was intended: the number of fake coins.

\begin{definition}
The revealing coefficient is defined as $1 - 1/X$. We denote it by $R$:
$$R = 1 - \frac{\text{\# new possibilities}}{\text{\# old possibilities}}.$$
\end{definition}

We would like both the revealing coefficient and the revealing factor to be as small as possible in order to minimize the transfer of information.

In Strategy~\ref{str:1}, the revealing factor is $X=3160/1600=1.98.$ The revealing coefficient is $R=(3160-1600)/3160 \approx 0.494$, slightly less than one half.

Let's calculate the revealing factor and coefficient for our first discreet solution (Strategy~\ref{str:4}) to the original problem to help solidify these two new concepts. We have five piles $A$, $B$, $C$, $D$, and $E$ of 10, 10, 20, 20 and 20 coins correspondingly. We showed that there are two possibilities: either piles $A$, $D$ and $E$ contain one fake coin each, or piles $B$, $C$, and $E$ contain one fake coin each. After the weighings the number of new possibilities is $10 \cdot 20 \cdot 20 + 10 \cdot 20 \cdot 20= 8,000$. The number of old possibilities is $\binom{80}{3}= 82,160$. The revealing factor is $X=\frac{82160}{8000}=10.27$ and the revealing coefficient is $R = 1 - \frac{8000}{82160} \approx 0.903$. 

We will give one lemma regarding the revealing coefficient:

\begin{lemma}\label{0<R<1}
After the first weighing with an equal number of coins in both pans, $0<R<1$.
\end{lemma}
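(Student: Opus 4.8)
The plan is to unwind the definition $R = 1 - (\#\,\text{new possibilities})/(\#\,\text{old possibilities})$, in which the number of old possibilities is $\binom{t}{f}$, and to establish the two inequalities $R<1$ and $R>0$ separately. I work in the admissible regime $0<f<t$ from the previous section, and I read the hypothesis as in the motivating Strategy~\ref{str:1}: the first weighing places an equal number $n\ge 1$ of coins in each pan and the scale balances. Write $P$ and $Q$ for the two pans and $O$ for the set of $m=t-2n\ge 0$ coins left off the scale. Since real and fake coins have distinct weights and $|P|=|Q|=n$, a balance of the pans is equivalent to $|F\cap P|=|F\cap Q|$ for the true fake set $F$, and the new possibilities are exactly the $f$-subsets satisfying this equality. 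The inequality $R<1$ is then immediate: it is equivalent to $\#\,\text{new}>0$, and the genuine configuration is the one that produced the observed balance, so it is consistent and counted, giving $\#\,\text{new}\ge 1$; as $\binom{t}{f}$ is finite and positive, the ratio is strictly positive and $R<1$.

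The substance is the inequality $R>0$, which is equivalent to $\#\,\text{new}<\#\,\text{old}$, i.e.\ the balanced weighing must rule out at least one of the $\binom{t}{f}$ a priori configurations. By the equivalence above it suffices to exhibit a single $f$-subset $F$ with $|F\cap P|\neq|F\cap Q|$. I would construct such an $F$ by loading one pan as heavily as possible. Place $a=\min(f,n)\ge 1$ fake coins in $P$. If $f\le n$, then $a=f$ and no fakes remain, so $|F\cap P|=f\ge 1$ while $|F\cap Q|=0$, and the counts differ. If $f>n$, then $a=n$ fills $P$, and I distribute the remaining $f-n$ fakes between $Q$ and $O$, pushing as many as possible off the scale, so that $|F\cap Q|=\max(0,\,f-n-m)$; one checks this is feasible because $\max(0,\,f-n-m)\le n$ reduces to $f\le 2n+m=t$.

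The crux, and the only place where the hypotheses genuinely enter, is verifying in the case $f>n$ that this construction really gives $|F\cap Q|\neq|F\cap P|=n$. The minimal count could equal $n$ only if $\max(0,\,f-n-m)=n$, which forces $f=2n+m=t$; the strict inequality $f<t$ forbids this, so $|F\cap Q|<n$ and the two intersection sizes differ. (The assumptions $n\ge 1$ of a genuine weighing and $f\ge 1$ are used to make $a\ge 1$ and hence produce a nonzero imbalance in the easy case.) Having exhibited one forbidden configuration we conclude $\#\,\text{new}<\#\,\text{old}$, so $R>0$, and together with the first paragraph this yields $0<R<1$. I expect the bookkeeping of the $f>n$ subcase to be the main obstacle, as it is precisely the point at which \emph{equal pan sizes} and \emph{$f<t$} must be combined to exclude the degenerate situation in which every $f$-subset splits evenly across the two pans.
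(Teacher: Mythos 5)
Your proof is correct and rests on the same basic idea as the paper's: $R<1$ because the true configuration remains consistent (so the number of new possibilities is positive), and $R>0$ because the outcome of the weighing constrains how the $f$ fakes can be distributed across the two pans. The difference is one of rigor and of scope. The paper simply asserts that after the weighing ``it is no longer possible for the $f$ fake coins to be distributed any way we like'' without exhibiting a configuration that is actually excluded; you close that gap by explicitly constructing an $f$-subset with $|F\cap P|\neq|F\cap Q|$, and your bookkeeping correctly identifies that this construction needs exactly the hypotheses $0<f<t$ and equal pan sizes (the degenerate case $f=t$ being the only obstruction). On the other hand, you restrict to the case where the scale balances, whereas the paper's argument also covers an unbalanced first weighing (there the excluded configurations are the balanced ones, or more generally those inconsistent with the observed tilt); if the lemma is read as applying to an arbitrary first weighing, you would need a symmetric companion argument for that case. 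Within the balanced case your proof is complete and, if anything, more careful than the paper's.
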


\begin{proof}
The right side of this inequality is trivial as it is always true that the number of new possibilities is greater than 0. The left side of this inequality is equivalent to saying that the first weighing necessarily reveals information. Consider any weighing using $2n$ coins from the original pile of $t$, where we have $n$ coins in each pan. If the pans are balanced, we know that both groups of $n$ coins have the same number of fake coins. If one of the pans is lighter than the other, then we know that there are more fake coins in that pan than there are in the other. In either case, it is no longer possible for the $f$ fake coins to be distributed any way we like; they must be distributed in a way that is consistent with the weighing. As a result, it is always true that the number of new possibilities is less than $\binom{t}{f}$, or the number of old possibilities.
\end{proof}

\subsection{Different Strategies for Given Parameters Reveal Different Amounts of Information}

Suppose we have 80 coins and we want to prove that 4 are fake as opposed to 3. We can use Strategy~\ref{str:1*} to do that: Divide all coins into four piles of 20 with each pile containing one fake coin. Showing that all piles weigh the same tells the judge the number of fake coins must be a multiple of 4, and we are done. We can, however, produce another discreet strategy (as hinted at briefly in Section~\ref{simp}): Simply divide the coins into two piles of 40 and put two fake coins in each pile. After comparing the two piles the judge knows that the number of fake coins is even. Both strategies are discreet, but the revealing factor and coefficient are different. The total number of possibilities before weighings is $\binom{80}{4} = 1,581,580$. After the first strategy the number of new possibilities is $20^4=160,000$. The revealing factor is $X \approx 9.9 $ and the revealing coefficient $R \approx 0.899$. After the second strategy the number of new possibilities is $\binom{40}{2}^2=608,400$. The revealing factor is $X \approx 2.60$ and the coefficient is $R \approx 0.615$.

The second strategy is significantly less revealing; dividing all the coins into fewer equivalent piles is clearly preferable in this case. 

We can generalize these two strategies for the case when $f$ and $t$ have a common divisor greater than 1. If $a>1$ divides both $f$ and $t$ but not $d$, then the following strategy is discreet:

\begin{strategy}\label{str:6}
The lawyer divides all the coins into $a$ piles, each having the same number of fake coins. She then demonstrates to the judge that the piles all have the same weight, thus proving that the number of fake coins is divisible by $a$.
\end{strategy}

The revealing factor for this strategy is
$$X =\frac{\binom{t}{f}}{\binom{t/a}{f/a}^a} \sim \dfrac{f^{f}}{f!} \bigg( \dfrac{(\frac{f}{a})!}{(\frac{f}{a})^{(\frac{f}{a})}} \bigg) ^a, $$ where the right hand side is the value that $X$ approaches as $t$ tends to infinity.

There are often many values of $a$ that satisfy the above conditions, but the smallest possible such value will reveal the least amount of information for this strategy.  Notice that, when $f$ divides $t$ and we choose $a=f$, the weighing scheme is identical to that of Strategy~\ref{str:1*}.

\subsection{The Revealing Factor/Coefficient for Indiscreet Weighings}

The revealing factor/coefficient can be defined for both discreet and indiscreet strategies. Surprisingly, we often see that indiscreet strategies reveal less information than discreet strategies do. Let's go back to the original problem and its ``wrong,'' or rather indiscreet, solutions. In the first solution, described in Strategy~\ref{str:2}, after the weighings the judge knows the locations of 3 real coins. The other coins are divided into 3 groups of 26, 26 and 25 coins, containing one fake coin each. Thus the number of possibilities after the weighings is $26\cdot 26 \cdot 25 = 16,900$. The number of possibilities before the weighings is $\binom{80}{3}= 82,160$, so the revealing factor is $X \approx 4.86$, and the revealing coefficient is $R \approx 0.794$. We see that although this strategy is indiscreet and the privacy of 3  real coins is violated, it is less revealing than the discreet Strategy~\ref{str:4} with a revealing factor of 10.27. We can see that the three coins that were exposed as real effectively ``sacrificed'' their privacy in order to make the other coins more secure in their wish to remain hidden.

The next indiscreet example, Strategy~\ref{str:3}, is more revealing than Strategy~\ref{str:2} as the groups containing the fake coins are smaller in comparison. The number of new possibilities is $20^3=8000$, so the revealing factor is $X \approx 10.27$ and the revealing coefficient is $R \approx 0.903$; interestingly, these values are exactly the same as those given by discreet Strategy~\ref{str:4} even though one fourth of the entire set of coins has its authenticity revealed.

\subsection{An Optimality Proof}

Here we would like to show a proof of optimality for a given discreet strategy. Namely, we will consider the case when $t=2k$, $f=2$ and $d=1$, for some positive integer $k > 1$. We've already discussed the strategy (see Strategy~\ref{str:1*}) using one weighing: divide all the coins into two piles of size $k$, put fake coins into the separate piles, and compare the piles. 

It might seem obvious that this is a ``least revealing,'' or optimal, strategy, but still we need a proof. First, we will introduce more definitions and notation. During any weighing, a coin's presence on the \textbf{l}eft pan is denoted by $L$, a coin's presence on the \textbf{r}ight pan is denoted by $R$, and a coin not participating (one that is left \textbf{o}utside of the weighing) is denoted by $O$.

After all the weighings, every coin's path can be described as a string of $L$'s, $R$'s, and $O$'s.

\begin{definition}
The string of $L$'s, $R$'s, and $O$'s corresponding to the location of a given coin in every weighing is called the coin's \textit{itinerary}.
\end{definition}

Given an itinerary $\delta$, we denote the set of all coins with this itinerary as $\delta$, and the size of this set as $|\delta|$. We will introduce an involutive operation on itineraries called conjugation:

\begin{definition}
Given an itinerary $\delta$, \textit{the conjugate itinerary}, denoted by $\bar{\delta}$, is the unique itinerary in which all $R$'s are replaced by $L$'s, and all $L$'s are replaced by $R$'s.
\end{definition}

Notice that this is an involution as $\bar{\bar{\delta}} = \delta$. In addition, the only self-conjugate itinerary is a string of $O$s. After the weighings we can partition all the coins into groups by their itineraries.

In the following preliminary lemma it is not necessary that $t$ be even.

\begin{lemma}
If $f=2$ and $d=1$, then the set of itineraries of a  discreet strategy satisfies the following properties: If there are coins with itinerary $\delta$, then there are coins with itinerary $\bar{\delta}$. In addition, there are no coins with a self-conjugate itinerary. Also, all weighings must balance. 
\end{lemma}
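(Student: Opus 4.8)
The plan is to read everything off the coins' itineraries, exploiting the fact that with exactly two fake coins the result of each weighing is completely determined by where those two coins sit. I would prove the three assertions in the order (balance), then (conjugate partner exists), then (no self-conjugate itinerary), since each part leans on the previous one. As throughout the paper I take every weighing to place equally many coins in the two pans, so that a pan is lighter exactly when it carries more fake coins. With only two fakes this means a weighing balances precisely when the two fakes lie on opposite pans or are both left out, and is unbalanced precisely when they share a pan or exactly one of them sits on a pan.

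First I would show all weighings balance. Suppose one were unbalanced, with (say) the left pan heavier, so the left carries strictly fewer fakes than the right. A single fake on the left would force at least two on the right, hence at least three fakes in all, which is impossible; so in every distribution of the two fakes that reproduces this outcome the heavier pan holds no fake at all. Every coin on that pan is then forced to be genuine --- exactly the reasoning used in Lemma~\ref{thm:f=1} --- so the strategy is indiscreet, a contradiction. Hence no weighing is unbalanced.

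Next I would treat the remaining two properties together. Once all weighings balance, a pair $\{c_1,c_2\}$ is a consistent location for the two fakes if and only if, in every weighing, the two coins occupy opposite pans or are both out; by the definitions of itinerary and conjugation this says exactly that $\delta_{c_2}=\bar{\delta_{c_1}}$. Discreetness requires that no coin be exposed as genuine, i.e.\ that every coin be fake in at least one consistent pair, so every itinerary $\delta$ that occurs must have a populated conjugate $\bar{\delta}$; this is the first property. For the second, recall that conjugation swaps $L$ and $R$ and fixes $O$, so the only self-conjugate itinerary is the all-$O$ string, belonging to a coin that never reaches the scale. If such a coin existed, then a single fake placed there would also leave every weighing balanced --- the very outcome that actually occurred --- so the observations would be consistent with there being only one fake. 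That contradicts the requirement that the strategy disprove $d=1$, and hence no self-conjugate itinerary can appear.

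The step I expect to be most delicate is the balance argument: it rests on the convention that balanced weighings use equally many coins per pan (so that ``heavier'' genuinely means ``fewer fakes''), together with the $f=2$ arithmetic that pins the heavier pan to zero fakes. The correspondence between consistency and itinerary conjugation is the conceptual heart of the second step and should be verified symbol by symbol, but once the equal-pan normalization is in place it reduces to routine bookkeeping.
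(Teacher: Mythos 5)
Your proposal is correct and follows essentially the same route as the paper's proof: unbalanced weighings expose the heavier pan as all-real, balanced weighings force the two fakes into conjugate itineraries (so an unpaired itinerary is exposed as real), and an all-$O$ coin would leave the single-fake hypothesis unrefuted. Your version merely makes explicit two points the paper leaves implicit --- the equal-pans convention and the $f=2$ arithmetic pinning the heavier pan to zero fakes --- which is a reasonable elaboration, not a different argument.
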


\begin{proof}
All weighings must be balanced, otherwise the heavier pan must contain only real coins and the strategy is indiscreet.

It follows that the two fake coins can't be in the same pan at any point. Moreover, if a fake coin is in one of the pans during a weighing, the other fake coin must be in the other pan in order to balance it.  This means that the fake coins have conjugate itineraries.

Because of the above condition, if there exists a coin with an itinerary $\delta$ and there are no coins with itinerary $\bar{\delta}$, then all the coins in $\delta$ are necessarily real.

If one coin did not partake in any of the measurements and all the weighings were balanced, then we have not disproven the possibility of only one fake coin. Thus, there may
not be any coins with a self-conjugate itinerary.
\end{proof}

Now we are ready to prove the optimality theorem for an even number of coins.

\begin{theorem}\label{alg-opt}
If $t$ is even, $f=2$ and $d=1$, then Strategy~\ref{str:1} is the least revealing out of any possible strategy.
\end{theorem}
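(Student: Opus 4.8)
The plan is to recast optimality as a counting problem. Since the revealing factor $X = \binom{t}{f}/(\#\text{new possibilities})$ and the revealing coefficient $R = 1 - (\#\text{new possibilities})/\binom{t}{f}$ are both strictly decreasing in the number of new possibilities, ``least revealing'' is equivalent to \emph{maximizing} the number of new possibilities. Strategy~\ref{str:1} splits the $t=2k$ coins into two piles of $k$ with one fake in each, so its new-possibility count is $k \cdot k = k^2$; it therefore suffices to prove that no discreet strategy can leave more than $k^2$ candidate pairs of fakes.

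First I would determine, for an arbitrary discreet strategy, exactly which pairs of coins survive as candidate fake-pairs after all the weighings. By the preceding lemma every weighing balances, so each pan contains equally many fakes; with $f=2$ this forces a pair $\{a,b\}$ to be consistent with the observed weighings precisely when, in every weighing, the two coins either both sit out or occupy opposite pans. That is exactly the statement that their itineraries are conjugate. Conversely, any pair with conjugate itineraries reproduces every balanced weighing and hence remains a genuine possibility. Thus the number of new possibilities equals the number of unordered pairs of coins whose itineraries are conjugate.

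Next I would organize the count. By the lemma the nonempty itinerary classes occur in conjugate pairs $\{\delta_1,\bar\delta_1\},\dots,\{\delta_m,\bar\delta_m\}$, none self-conjugate, so writing $a_j = |\delta_j|$ and $b_j = |\bar\delta_j|$ (each at least $1$) the new-possibility count is $N = \sum_{j=1}^{m} a_j b_j$, since each admissible pair draws one coin from a class and one from its \emph{distinct} conjugate. Setting $s_j = a_j + b_j$ we have $\sum_j s_j = t = 2k$, and then AM--GM gives $a_j b_j \le s_j^2/4$, while $\sum_j s_j^2 \le \big(\sum_j s_j\big)^2$ because the $s_j$ are nonnegative. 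Combining, $N \le \tfrac14 \sum_j s_j^2 \le \tfrac14 \big(\sum_j s_j\big)^2 = \tfrac14 (2k)^2 = k^2$. Since Strategy~\ref{str:1} attains $N = k^2$, it maximizes the new possibilities and is therefore least revealing. Tracking the equality conditions ($a_j = b_j$ for every $j$, and a single conjugate pair $m=1$) would additionally show the optimum is essentially unique.

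The main obstacle I anticipate is the first step: pinning down the candidate pairs rigorously, in particular justifying that balance forces equally many fakes per pan (which tacitly relies on the equal-pan-size convention used throughout) and verifying the converse direction, namely that every conjugate pair really persists as a possibility rather than being eliminated by some combination of weighings. Once that equivalence is secured, the remainder reduces to the short two-step inequality above.
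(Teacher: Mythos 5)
Your proposal is correct and follows essentially the same route as the paper: both reduce ``least revealing'' to maximizing the new-possibility count $\sum_j |\delta_j|\cdot|\bar{\delta_j}|$ over conjugate itinerary pairs subject to $\sum_j (|\delta_j|+|\bar{\delta_j}|)=t$, using the preceding lemma on itineraries. Your explicit AM--GM and sum-of-squares chain simply fills in what the paper dismisses as ``standard algebra arguments,'' and your attention to the converse (that conjugate pairs genuinely remain possible) is a reasonable extra care the paper leaves implicit.
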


\begin{proof}
All the coins are partitioned into groups with the same itineraries and all the itineraries are paired up by conjugation. The set of itineraries is $(\delta_j,\bar{\delta_j})$ for $j=1,2,3,\ldots$. If one fake coin belongs to $\delta_j$, then the other fake coin must belong to $\bar{\delta_j}$. This means the total number of new possibilities is
$$\sum_j |\delta_j|\cdot |\bar{\delta_j}|,$$
where $\Sigma_j (|\delta_j|+|\bar{\delta_j}|)=t$ is a fixed number.

Standard algebra arguments show that the number of new possibilities is maximized when there is exactly one itinerary pair with two itineraries of equal size.
\end{proof}

\subsection{An Oddity: The Case of Odd $t$}\label{oddity}

We would like to discuss optimality in a more complicated case: when the total number of coins is odd, $f=2$, and $d=1$.

\begin{lemma}
A discreet strategy for the given parameters must generate at least 6 different itineraries. Among the itineraries there will be at least 3 conjugate pairs; additionally, the number of coins in the two groups in each pair must be of different parity.
\end{lemma}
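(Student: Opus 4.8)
The plan is to combine the conjugate-pair structure supplied by the preceding lemma with a parity count forced by $t$ being odd. By that lemma the coins partition into conjugate pairs of itinerary-groups $(\delta_j,\bar\delta_j)$, there is no self-conjugate (all-$O$) group, and every weighing balances. Since each weighing compares equally many coins on each pan (as in all the strategies above), the equal-count identity holds for each weighing $i$: encoding the $i$-th entry of an itinerary $\delta$ by $\sigma_i(\delta)\in\{+1,-1,0\}$ for $L,R,O$, so that $\sigma_i(\bar\delta)=-\sigma_i(\delta)$, it reads $\sum_\delta \sigma_i(\delta)\,|\delta| = 0$. Grouping the two members of each pair and writing $m_j=|\delta_j|-|\bar\delta_j|$, this becomes
$$\sum_j \sigma_i(\delta_j)\,m_j = 0 \qquad\text{for every weighing } i.$$
The only other global fact I need is $t=\sum_j\bigl(|\delta_j|+|\bar\delta_j|\bigr)$ together with the congruence $|\delta_j|+|\bar\delta_j|\equiv m_j \pmod 2$, so that the two groups of pair $j$ have different parity exactly when $m_j$ is odd.

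The engine of the argument is to read these relations modulo $2$. Every pair with $m_j$ even then drops out, leaving $\sum_{j:\,m_j\ \mathrm{odd}} \sigma_i(\delta_j)\equiv 0\pmod 2$ for each $i$; that is, the odd-$m_j$ pairs cover each weighing an even number of times. Meanwhile $t\equiv\sum_j m_j\equiv \#\{\,j:m_j\ \mathrm{odd}\,\}\pmod 2$, so $t$ odd forces an odd number of odd-$m_j$ pairs, hence at least one. I would then rule out exactly one such pair: a single odd-$m_j$ pair would have to satisfy $\sigma_i(\delta_j)=0$ in every weighing, making $\delta_j$ the all-$O$ itinerary and contradicting the no-self-conjugate conclusion of the previous lemma. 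Therefore the number of opposite-parity pairs is odd and not $1$, so it is at least $3$. This already delivers at least $3$ conjugate pairs and hence at least $6$ distinct itineraries.

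The remaining and most delicate point is that the two groups in \emph{every} pair have different parity, i.e.\ that no pair has even $m_j$. The odd-$m_j$ pairs are already accounted for, so the difficulty is confined to even-$m_j$ pairs, and in particular to pairs with $m_j=0$ (equal-size groups): such pairs carry coefficient $0$ in each relation $\sum_j \sigma_i(\delta_j) m_j = 0$, are invisible to the balance constraints, and therefore cannot be excluded by the counting argument alone. I expect this to be the main obstacle. The plan is to eliminate them by a minimality reduction — arguing that an equal-size pair plays no role in disproving the one-fake hypothesis and may be redistributed among the other pairs, so that it suffices to treat strategies carrying only odd-$m_j$ pairs, each then automatically of opposite parity. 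Making this reduction airtight, rather than the clean mod-$2$ count, is where the real work lies: upgrading ``at least three opposite-parity pairs'' to ``every pair'' requires controlling the full integer kernel of the weighing matrix and using the bounded-entry ($\{+1,-1,0\}$) structure of the itinerary vectors, whereas the lower bound of three follows purely from the reduction modulo $2$.
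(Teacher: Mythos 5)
Your mod-2 count is correct and is, at bottom, the same argument the paper gives in words: the paper observes that $t$ odd forces at least one conjugate pair of opposite parity, that such a pair sits on the scale in some weighing whose two pans carry equal numbers of coins, so a second opposite-parity pair is needed to restore that count, and that the total number of opposite-parity pairs must be odd, hence at least $3$. Your relation $\sum_j \sigma_i(\delta_j)\,m_j = 0$ read modulo $2$, together with $t \equiv \#\{j : m_j \text{ odd}\} \pmod 2$ and the exclusion of the all-$O$ itinerary, packages those three observations into one computation. Your route to ``at least $6$ itineraries'' as an immediate corollary of ``at least $3$ conjugate pairs'' is in fact tidier than the paper's, which argues for the $6$ itineraries separately by restricting attention to two particular weighings.

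Where you go astray is the final paragraph. The lemma, as the paper proves it and as it is used in the corollary that follows (where each of the three opposite-parity pairs is counted as containing at least $1+2=3$ coins, giving $t \ge 9$), asserts only that there are at least three conjugate pairs \emph{whose} two groups have opposite parity --- not that every pair does. The stronger reading you are trying to rescue is false, so no minimality reduction can close that gap: for $t=11$ take four conjugate pairs of sizes $(2,1)$, $(1,2)$, $(2,1)$, $(1,1)$ with itineraries $(LOO,ROO)$, $(LLO,RRO)$, $(OLO,ORO)$, $(LOL,ROR)$; the three weighings compare $4$ vs.\ $4$, $3$ vs.\ $3$, and $1$ vs.\ $1$ coins, all balance, every coin is on the scale at some point, the strategy is discreet, and the last pair has two groups of the same parity. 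You should therefore drop that paragraph entirely: your mod-$2$ argument already establishes everything the lemma actually claims, and the ``every pair'' refinement is not only unnecessary but unprovable.
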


\begin{proof}
Since the number of coins is odd, every weighing has some coins that aren't on the scale. This means that one weighing is not enough and  we need at least one more weighing to account for the coins that are left out. Let us restrict the itinerary to these two weighings. We have coins with itineraries $LO$, $RO$, $OL$, and $OR$. Since the number of coins not counted by these itineraries is odd, there exist coins with yet another itinerary. Furthermore, as the total number of itineraries is even, we can conclude there are at least 6 of them.

Since the total number of coins is odd, all pairs of itineraries can't have numbers of coins with the same parity; at least one pair must have different parity. This pair is on the scale at some point with the same number of coins on both pans, implying that at least one other pair of itineraries has different parities. The total number of pairs of itineraries with different parities must be odd, hence there are at least 3 of them.
\end{proof}

Now we will provide some more promised examples for which a discreet strategy does not exist.

\begin{corollary}\label{thm:5-2-1-impossible}
For $f=2$ and $d=1$, it is impossible to have a discreet strategy when $t=3$, $t=5$, or $t=7$.
\end{corollary}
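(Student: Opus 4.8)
The plan is to turn the preceding ``Oddity'' lemma into a lower bound on $t$ and then simply observe that $3$, $5$, and $7$ all fall below it. Since $3$, $5$, and $7$ are odd, the lemma governing odd $t$ with $f=2$ and $d=1$ applies verbatim: any discreet strategy must produce at least three conjugate pairs of itineraries whose two groups have sizes of opposite parity. The whole argument is therefore a short counting argument riding on that structural result, with no new weighing-theoretic input required.

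First I would record the consequences of the earlier preliminary lemma (the one asserting no self-conjugate itineraries for $f=2$, $d=1$): every coin lies in some conjugate pair $(\delta_j,\bar{\delta_j})$ with $\delta_j \neq \bar{\delta_j}$, so the coins partition into such pairs and $t = \sum_j \bigl(|\delta_j| + |\bar{\delta_j}|\bigr)$. Next I would isolate a single conjugate pair whose two group sizes have opposite parity. Both groups are genuinely realized by coins, so $|\delta_j| \geq 1$ and $|\bar{\delta_j}| \geq 1$; opposite parity then forces one of the two sizes to be even, hence at least $2$, giving $|\delta_j| + |\bar{\delta_j}| \geq 3$ for every opposite-parity pair.

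I would then assemble the bound. The Oddity lemma supplies at least three opposite-parity pairs, these pairs are disjoint, and every remaining (same-parity) pair contributes a nonnegative number of coins, so $t \geq 3 \cdot 3 = 9$. Consequently no discreet strategy can exist whenever $t < 9$, and in particular the cases $t = 3$, $t = 5$, and $t = 7$ are all impossible, which is exactly the claim.

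The only delicate point—more a bookkeeping check than a genuine obstacle—is justifying the per-pair bound of $3$, namely that the even-sized group of an opposite-parity pair cannot be empty and so has size at least $2$. This is immediate from the definition of a conjugate pair together with the no-self-conjugate condition (if coins realize $\delta_j$ then coins realize $\bar{\delta_j}$, so neither group vanishes). Once the Oddity lemma is taken as given, the corollary reduces to this parity-and-counting estimate and requires nothing further.
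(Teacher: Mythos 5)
Your proof is correct and takes essentially the same approach as the paper's: it relies on the Oddity lemma to extract at least three opposite-parity conjugate pairs, bounds each such pair below by $3$ coins, and concludes $t \ge 9$. You make the per-pair bound slightly more explicit than the paper does, and the single $t \ge 9$ estimate lets you dispense with the paper's separate appeals to Lemma~\ref{lem:f=t-1} for $t=3$ and to the six-itinerary count for $t=3,5$.
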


\begin{proof}
We already know that the discreet strategy is impossible for $t=3$ because $f=t-1$; see Lemma~\ref{lem:f=t-1}.

Both three and five coins clearly can't have 6 different itineraries. In addition, three of the pairs of itineraries have different parity, which means there must be at least 3 coins in each pair. The total number of coins is at least 9.
\end{proof}

What happens when $t > 7$? Does the proof for the nonexistence of a discreet strategy become increasingly difficult as $t$ increases? No---it turns out that this pattern does not continue. Here is a discreet strategy that works for $t>7$; this strategy is a modification of Strategy~\ref{str:5}.

\begin{customstr}{5*}\label{str:5*} (($2k+1$)-2-1)
Consider piles $A$ and $\bar{A}$ of sizes $\Bigl\lfloor\dfrac{t}{2}\Bigr\rfloor-2$ and $\Bigl\lfloor\dfrac{t}{2}\Bigr\rfloor-3$ respectively. Piles $B$ and $\bar{B}$ have sizes 1 and 2 respectively. Piles $C$ and $\bar{C}$ have sizes 2 and 1. The lawyer distributes the two fake coins into piles $A$ and $\bar{A}$, $B$ and $\bar{B}$, or $C$ and $\bar{C}$. 

This strategy has two weighings. In the first the lawyer compares $\Bigl\lfloor\dfrac{t}{2}\Bigr\rfloor-1$ coins belonging to $A$ and $B$ against $\Bigl\lfloor\dfrac{t}{2}\Bigr\rfloor-1$ coins belonging to $\bar{A}$ and $\bar{B}$. In the second weighing she compares three coins belonging to $B$ and $C$ against the same number of coins in $\bar{B}$ and $\bar{C}$.
\end{customstr}

\begin{lemma}\label{thm:9-2-1-possible}
For $f=2$ and $d=1$, Strategy~\ref{str:5*} is discreet when $t$ is odd and $t>7$.
\end{lemma}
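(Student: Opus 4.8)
The plan is to work entirely with itineraries, exactly as in the optimality proof for even $t$. First I would record the itinerary of each pile under the two weighings. Since the first weighing places $A$ and $B$ on the left against $\bar A$ and $\bar B$ on the right, and the second places $B$ and $C$ on the left against $\bar B$ and $\bar C$ on the right, the six piles receive itineraries $A = LO$, $\bar A = RO$, $B = LL$, $\bar B = RR$, $C = OL$, and $\bar C = OR$. These split into exactly the three conjugate pairs $(LO,RO)$, $(LL,RR)$, $(OL,OR)$, and none is self-conjugate (no all-$O$ string occurs). I would also check that the six sizes add up to $t$ and that in each of the three legitimate placements of the fakes --- both fakes in a single conjugate pair --- each weighing has exactly one fake per pan or none, so that both weighings balance; this confirms the lawyer can actually carry out the strategy with the fakes where she claims.

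Next I would show that two balanced weighings force $f=2$, ruling out $f=1$. Writing the number of fakes in the six piles as $a,\bar a,b,\bar b,c,\bar c$, the balancing of the two weighings is equivalent to the equations $a+b=\bar a+\bar b$ and $b+c=\bar b+\bar c$. A single fake coin puts a $1$ in exactly one of these counts and $0$ in the rest; running through the six cases shows each violates at least one equation, since a lone fake in $A$, $B$, $\bar A$, or $\bar B$ unbalances the first weighing, while a lone fake in $C$ or $\bar C$ unbalances the second. Hence both weighings balancing is incompatible with $f=1$, so the judge concludes $f=2$.

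For discreetness I would first pin down the judge's entire set of possibilities by solving those two equations subject to $a+\bar a+b+\bar b+c+\bar c=2$. The cleanest route is again the itinerary argument: two fakes balance a weighing in a given coordinate only if their positions there are either both $O$ or one $L$ and one $R$; imposing this on both coordinates for every pair drawn from the six itineraries leaves exactly the three conjugate pairs $(A,\bar A)$, $(B,\bar B)$, $(C,\bar C)$ as admissible two-fake distributions. Consequently a coin in any one pile is fake in precisely the distribution assigned to its conjugate pair and real in the other two. Discreetness then reduces to checking that each of the three distributions is genuinely available, i.e. that every pile is nonempty so the single fake it must hold also has a home in its conjugate partner.

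The one real obstacle, and the source of the hypothesis $t>7$, is this nonemptiness check. The binding constraint is the smallest pile $\bar A$, of size $\lfloor t/2\rfloor-3$: for odd $t$ this is positive exactly when $\lfloor t/2\rfloor\ge 4$, that is $t\ge 9$, which for odd $t$ is the same as $t>7$. Once $\bar A$ is nonempty all other piles (sizes $\lfloor t/2\rfloor-2, 1, 2, 2, 1$) are too, all three conjugate-pair distributions are realizable, and each coin admits both a fake witness and a real witness among the admissible configurations. This shows no individual coin's status is determined, so the strategy is discreet for every odd $t>7$.
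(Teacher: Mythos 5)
Your proof is correct and takes essentially the same itinerary-based approach as the paper's (much terser) two-sentence argument: every coin is on the scale at some point and all weighings balance, which rules out a single fake, and the fakes can sit in any of the three conjugate pairs, so no coin's status is determined. You additionally make explicit what the paper leaves implicit --- that the hypothesis $t>7$ is precisely what guarantees $|\bar{A}|=\lfloor t/2\rfloor-3\ge 1$, so that all three admissible distributions are realizable and every coin has both a fake and a real witness.
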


\begin{proof}
All coins were on the scale and all the weighings were balanced. This means that there are two fake coins.

No information about any particular coin is revealed as the fake coins can belong to any pair of groups with conjugate itineraries.
\end{proof}

\begin{lemma}
If $t$ is odd, $f=2$ and $d=1$, then Strategy~\ref{str:5*} is least revealing.
\end{lemma}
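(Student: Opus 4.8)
The plan is to translate the claim into the same combinatorial optimization used in the proof of Theorem~\ref{alg-opt}. As established there, a discreet strategy partitions the coins into conjugate pairs of itineraries with sizes $(a_j,b_j)$, and the number of new possibilities equals $\sum_j a_j b_j$, subject to $\sum_j (a_j + b_j) = t$ with every $a_j,b_j\ge 1$. Since the revealing factor is $\binom{t}{2}$ divided by the number of new possibilities, ``least revealing'' means precisely ``maximizes $\sum_j a_j b_j$.'' So the lemma reduces to showing that, among all admissible systems of conjugate pairs, Strategy~\ref{str:5*} attains the maximum value of this sum. Because Lemma~\ref{thm:9-2-1-possible} already certifies that Strategy~\ref{str:5*} is a genuine discreet strategy, it suffices to prove an upper bound on $\sum_j a_j b_j$ that Strategy~\ref{str:5*} meets with equality; in particular I never need to check realizability of the bound's optimizer.

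First I would record the constraints forced by the preceding lemma for odd $t$: there are at least three conjugate pairs whose two groups have different parity, and in fact the number of such different-parity pairs is odd (hence at least three). I would then bound each pair's contribution: for a pair of total size $s=a_j+b_j$, the product $a_j b_j$ is largest for the most balanced split, giving $a_j b_j \le (s^2-1)/4$ when $s$ is odd (a different-parity pair, forcing $s\ge 3$) and $a_j b_j \le s^2/4$ when $s$ is even (a same-parity pair, $s\ge 2$). Substituting these into $\sum_j a_j b_j$ turns the problem into maximizing a weighted sum of squares of the pair-totals $s_j$ under the fixed budget $\sum_j s_j = t$, with at least three of the $s_j$ odd and at least $3$.

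The core step is the concentration argument for this sum-of-squares problem. With a fixed total, $\sum_j s_j^2$ is maximized by making the parts as unequal as possible, so the optimum keeps exactly the three mandatory different-parity pairs, pushes two of them down to the minimum size $s=3$ (the split $(1,2)$), and places the entire remaining budget $t-6$ into a single third different-parity pair, split as evenly as possible, namely $\bigl(\lfloor t/2\rfloor-2,\ \lfloor t/2\rfloor-3\bigr)$. I would verify that introducing any same-parity pair, or any fourth different-parity pair, only diverts coins away from this large pair and strictly decreases the total for $t>7$. Evaluating the bound gives $2\cdot\frac{3^2-1}{4} + \frac{(t-6)^2-1}{4} = \bigl(\lfloor t/2\rfloor-2\bigr)\bigl(\lfloor t/2\rfloor-3\bigr) + 4$, which is exactly the number of new possibilities produced by Strategy~\ref{str:5*}. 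Hence no discreet strategy can do better, proving optimality.

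The main obstacle I anticipate is this final optimization: carefully ruling out the many alternative pairings permitted by the weaker ``odd number of different-parity pairs'' condition, and in particular confirming that cheaper same-parity pairs never compensate for still being forced to supply three different-parity pairs. Making the concentration argument airtight against all these cases---rather than merely comparing against the single most tempting competitor, such as three minimal different-parity pairs plus one large same-parity pair---is where the real work lies.
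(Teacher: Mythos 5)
Your proposal is correct and follows essentially the same route as the paper's proof: reduce optimality to maximizing $\sum_j |\delta_j|\cdot|\bar{\delta_j}|$ subject to the preceding lemma's constraint of at least three different-parity conjugate pairs, then concentrate all remaining coins into one nearly balanced pair, matching the count $\bigl(\lfloor t/2\rfloor-2\bigr)\bigl(\lfloor t/2\rfloor-3\bigr)+4$ achieved by Strategy~5*. The per-pair bounds $(s^2-1)/4$ versus $s^2/4$ and the explicit sum-of-squares concentration step merely make rigorous what the paper asserts in one line, so this is a tightening of the same argument rather than a different method.
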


\begin{proof}
Similar to the proof of Lemma~\ref{alg-opt}, to increase the number of new possibilities we would like to move as many coins as possible to one of the conjugate pairs. Given that we have to keep at least 3 pairs of itineraries of different parity, the distribution for $(|A|,|\bar{A}|)$, $(|B|,|\bar{B}|)$, and $(|C|,|\bar{C}|)$ must be $\Big( \Bigl\lfloor\dfrac{t}{2}\Bigr\rfloor-2, \Bigl\lfloor\dfrac{t}{2}\Bigr\rfloor-3 \Big)$, $(1,2)$, and $(2,1)$, respectively. The number of new possibilities is 

$$|A|\cdot |\bar{A}| + |B|\cdot |\bar{B}| + |C|\cdot |\bar{C}| = \Big( \Bigl\lfloor\dfrac{t}{2}\Bigr\rfloor-2 \Big) \Big( \Bigl\lfloor\dfrac{t}{2}\Bigr\rfloor-3 \Big)+1\cdot 2+2\cdot 1.$$
\end{proof}

As the Theorem~\ref{discreetmethod} shows, for large enough $t$ we can often find a discreet strategy. Let us first introduce such a strategy that is defined for $\Bigl\lfloor\dfrac{t}{f}\Bigr\rfloor \ge 4$ and $f \nmid d$. The strategy is a generalization of Strategy~\ref{str:5*}.

\begin{customstr}{5**}\label{str:5**}
Let's say that we have $t = fk+r$, where $k$ and $r$ are positive integers, $0 < r < f$, and $k \ge 4$.
We will begin by splitting the coins into $3f$ total groups: $A_1, A_2, \ldots, A_f$, $B_1, B_2,\ldots, B_f$, and $C_1, C_2, \ldots, C_f$ with the same itineraries. The lawyer will put one fake coin in each of either $A_i$, $B_i$, or $C_i$, for $i=1,2,\ldots,f$. 

In groups $A_i$ for $1 \leq i \leq r$, we will have $|A_i|=k-2$, and for $r+ 1 \leq i \leq f$ we will have $|A_i|=k-3$. Similarly, we will have $|B_i|=1$ for $1 \leq i \leq r$ and $|B_i|=2$ otherwise, and in groups $C_i$ for $1 \leq i \leq r$ we will have $|C_i|=2$ and $|C_i|=1$ for all other values of $i$.

Now we carry out the weighings as follows. In $f-1$ weighings, we show that the $k-1$ coins from each $A_i+B_i$ for $1 \leq i \leq f$ balance one another on the scale. In $f-1$ more weighings, we demonstrate that the $3$ coins from each $B_i+C_i$ for $1 \leq i \leq f$ are equal in weight.
\end{customstr}

\begin{theorem}\label{discreetmethod}
If $\Bigl\lfloor\dfrac{t}{f}\Bigr\rfloor \ge 4$ and $0< d < f$, Strategy~\ref{str:5**} is discreet.
\end{theorem}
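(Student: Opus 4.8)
The plan is to translate the two families of balanced weighings into linear constraints on the number of fake coins in each group, and then to draw two conclusions: first, that the total number of fakes cannot be $d$, so the strategy genuinely proves $f$; second, that no individual coin is pinned down. Write $a_i$, $b_i$, $c_i$ for the number of fakes in $A_i$, $B_i$, $C_i$. Since each pile $A_i+B_i$ contains exactly $k-1$ coins, equal weight is equivalent to an equal number of fakes, so the first family of $f-1$ balanced weighings tells the judge that $a_i+b_i$ takes a common value $\alpha$ for every $i$; likewise, since each $B_i+C_i$ has $3$ coins, the second family forces $b_i+c_i$ to equal a common value $\beta$ for every $i$. I would first record that the three intended placements (all fakes in the $A_i$, all in the $B_i$, or all in the $C_i$) each produce balanced weighings and a total of $f$, so they are genuinely available to the lawyer.

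To show the strategy proves $f$, I would rule out a total of $d$. From $a_i=\alpha-b_i\ge 0$ and $c_i=\beta-b_i\ge 0$ we get $b_i\le\min(\alpha,\beta)$, whence the number of fakes in column $i$ is $a_i+b_i+c_i=\alpha+\beta-b_i\ge\max(\alpha,\beta)$. Summing over the $f$ columns, any balanced configuration has at least $f\max(\alpha,\beta)$ fake coins. If this total were $d$, then $f\max(\alpha,\beta)\le d<f$ would force $\max(\alpha,\beta)=0$, i.e. $\alpha=\beta=0$, giving a total of $0\ne d$. Hence no placement of $d$ fakes can balance all the weighings, and since the judge already knows the count is $f$ or $d$, he must conclude it is $f$. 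This is the one step that uses the hypothesis $0<d<f$.

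For discreetness I would use the three intended placements directly. The hypothesis $\lfloor t/f\rfloor\ge 4$ is exactly $k\ge 4$, which guarantees $|A_i|\ge k-3\ge 1$, so every group (all $B_i$, $C_i$ have size $1$ or $2$) is nonempty and each of the three ``row'' distributions can be realized by choosing one fake from each group in the row. Then for any coin $x$: if $x\in A_j$ it is fake in some realization of the $A$-row and real in every realization of the $B$-row; if $x\in B_j$ it is fake in the $B$-row and real in the $A$-row; if $x\in C_j$ it is fake in the $C$-row and real in the $A$-row. Thus every coin is fake in at least one and real in at least one consistent distribution, so no coin's status is determined and the strategy is discreet.

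The main obstacle is the ruling-out of $d$: the balance constraints by themselves do not force the total to be a multiple of $f$, so one cannot argue purely by divisibility as in Strategy~\ref{str:6}. The essential point is that the nonnegativity of each $a_i$, $b_i$, $c_i$ bounds the column totals below by $\max(\alpha,\beta)$, and it is this lower bound---not any divisibility---that collapses any total below $f$ down to $0$ and thereby eliminates $d$. A routine check that $\sum_i(|A_i|+|B_i|+|C_i|)=t$ confirms the groups partition all coins, and verifying that each intended placement balances the weighings is immediate.
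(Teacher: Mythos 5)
Your proposal is correct and follows essentially the same route as the paper: the balanced weighings of equal-sized piles force every column to contain a fake whenever any group does, which rules out a total of $d<f$, and the three realizable row-placements leave every coin's status ambiguous. Your linear-constraint formulation with $\alpha$, $\beta$ and the bound $a_i+b_i+c_i\ge\max(\alpha,\beta)$ is simply a more rigorous rendering of the paper's brief remark that one fake coin in any group forces at least $f$ fakes.
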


\begin{proof}
What can the judge conclude? Every coin was on the scale at some point. Suppose that a fake coin is in one of the groups. There are $f-1$ other groups of the same weight, so there must be at least $f$ fake coins. As $d<f$, there are exactly $f$ fake coins.

Using similar arguments as in the analysis of Strategy~\ref{str:5} the judge can conclude that there are three mutually-exclusive possibilities: 
\begin{itemize}
\item each $A$ group contains one fake coin,
\item each $B$ group contains one fake coin,
\item each $C$ group contains one fake coin,
\end{itemize}

The privacy of every coin is respected.
\end{proof}

\section{One Shot: Guessing a Single Fake Coin}\label{sec:6}

So far we have only looked at how much information is revealed about the group of fake coins as a whole. We might consider another goal of the judge: trying to guess the location of only one fake coin \cite{Zhang}. We already mentioned that Strategy~\ref{str:1} (80-2-1) keeps the judge's ability to guess the fake coin at $1/40$, the same as before the weighings.

Strategy~\ref{str:2} (80-3-2) reveals that each of three piles of sizes 26, 26, and 25 contains a fake coin. The best way for the judge to guess is to choose one coin out of the pile with 25 coins. The probability of success is $1/25$, which is slightly higher than before the weighings: $3/80$.

Strategy~\ref{str:3} (80-3-2) reveals that each of three piles of size 20 contains a fake coin. The probability of guessing one fake coin is $1/20$. This is worse than Strategy~\ref{str:2}, as the judge is more likely to find a fake coin.

Let us consider the official solution (Strategy~\ref{str:4}) for the original problem. At the end the judge knows that there are three coins that are distributed as follows: 

\begin{enumerate}
\item one fake coin in $E$ of size 20,
\item one fake coin in $A+B$ of size 20,
\item one fake coin in $C+D$ of size 40.
\end{enumerate}

We see that if the judge wants to guess one fake coin, he can do so with probability 1/20. This strategy also improved the judge's chances of guessing a fake coin.

Strategy~\ref{str:5} (80-3-2) can be analyzed in a manner similar to the previous strategies. Recall that this strategy leaves us with three distinct cases for the distribution of the three coins:

\begin{enumerate}
\item one fake coin in one of each: $A_1$, $A_2$, $A_3$ (sizes 24, 24, 23)
\item one fake coin in one of each: $B_1$, $B_2$, $B_3$ (sizes 1, 1, 2)
\item one fake coin in one of each: $C_1$, $C_2$, $C_3$ (sizes 2, 2, 1)
\end{enumerate}

If the judge wants to guess one fake coin, he can pick randomly from $A_3 + B_1 + C_3$, a pile of size 25. Thus, his chances of guessing one fake coin are 1/25: better than before the weighings. However, the judge's chances might significantly increase depending on how the lawyer distributes the fake coins. This is discussed further in the next section.

\subsection{Minimizing the Chance of Guessing One Fake Coin}

Let us for the duration of this subsection slightly alter the lawyer's goal. Given $t$, $f$, and $d$ we want the lawyer to create a strategy that proves to the judge that the number of fake coins is $f$ as supposed to $d$. Just as before, the weighings are allowed to be indiscreet, but with the additional goal of minimizing the judge's probability of guessing a single fake coin.

Consider the lawyer's options when it comes to hiding the fake coins in Strategy 5. A huge problem arises if she chooses each one of the three cases ($A$, $B$, or $C$) with probability 1/3. If the judge knows that, then he can pick either $B_1$, $B_2$, or $C_3$, and find a fake coin with probability 1/3. To make matters worse, if the judge picks a coin from each of $B_1$, $B_2$, and $B_3$, then he can find all of the fake coins with probability 1/6.

This strategy was clearly too unbalanced. Suppose the lawyer decides to place the fake coins in groups $A_1$, $A_2$, $A_3$ with probability $p_A$, in groups $B_1$, $B_2$, $B_3$ with probability $p_B$, and in groups $C_1$, $C_2$, $C_3$ with probability $p_C=1-p_A-p_B$.

Now suppose the judge is trying to randomly guess one counterfeit coin. If the judge guesses from the group $A_1+A_2+A_3$ he is successful with probability $p_A/23$. This is because the smallest such $|A_i|$ that he can guess from is $23$. If he guesses from the group $B_1+B_2+B_3$, he is successful with probability $p_B/1$, and if he guesses from the group $C_1+C_2+C_3$ he is successful with probability $p_C/1$; this is again because the smallest such $|B_i|$ and $|C_i|$ that the judge can pick from are both $1$. If the lawyer wants to decreases the chances of the judge finding one fake coin, she needs to pick probabilities proportional to the sizes of the piles. Namely: $p_A=23/25$ and $p_B = p_C = 1/25$. With this strategy, the judge will be able to guess one coin with almost the same probability after the weighings as before the weighings: $1/25$, which is only slightly larger than $f/t=3/80$.

The initial probability of guessing a fake coin is $f/t$ if the judge knows that there are exactly $f$ fake coins. After the weighings, this probability can only stay the same or increase. This minimum can be achieved if $t$ and $f$ have a common divisor $a$ that doesn't divide $d$, utilizing Strategy~\ref{str:6}.

We proved in Lemma~\ref{0<R<1} that any set of weighings will increases the judge's chances of finding all the fake coins. On the other hand, if the judge only wishes to find one fake coin, the lawyer can sometimes use a strategy that doesn't improve the judge's chances. However, whether the lawyer's goal is to hide just one fake coin or the entire set of fake coins, indiscreet strategies always help the judge:

\begin{lemma}
Any indiscreet strategy increases the judge's ability to guess a single fake coin.
\end{lemma}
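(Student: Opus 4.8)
The plan is to compare the judge's optimal single-coin guessing probability after the strategy against its baseline value $f/t$, and to show the former strictly exceeds the latter whenever the strategy is indiscreet. First I would fix notation: for each coin $c$, let $p_c$ denote the posterior probability that $c$ is fake, given the outcomes of all the weighings together with the lawyer's placement distribution, whatever it may be. Since the judge names a single coin, his success probability when guessing coin $c$ is exactly $p_c$, so his optimal guessing probability is $\max_c p_c$. Before any weighing the coins are symmetric and $p_c = f/t$ for every $c$, which recovers the stated initial value.

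The key observation is the identity $\sum_c p_c = f$. This holds because every configuration consistent with the weighings has exactly $f$ fake coins, as the strategy proves there are $f$ fakes; hence the expected number of fakes, namely $\sum_c p_c$, equals $f$ regardless of how the lawyer randomizes her placement. This single constraint, combined with the definition of indiscretion, drives the whole argument.

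Next I would invoke the meaning of an indiscreet strategy: some particular coin has its authenticity determined by the weighings. There are two cases. If a coin $y$ is revealed to be fake, then $p_y = 1$, so $\max_c p_c \ge 1 > f/t$ (using $f < t$) and the judge simply names $y$. If instead a coin $x$ is revealed to be real, then $p_x = 0$, and the identity forces the mass $\sum_{c \ne x} p_c = f$ to be spread over only $t-1$ coins. A pigeonhole (averaging) argument then yields $\max_{c \ne x} p_c \ge f/(t-1)$, and since $f > 0$ we have $f/(t-1) > f/t$. In either case the judge's optimal guessing probability strictly exceeds the baseline $f/t$, which is exactly what we want.

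The main subtlety to handle carefully is the link between indiscretion and the posterior $p_c$: indiscretion is a combinatorial property of the weighing outcomes (a coin has the same type in every consistent configuration), whereas the guessing probability depends on the lawyer's placement distribution. I would emphasize that the revelation forces $p_x = 0$ (or $p_y = 1$) for \emph{any} distribution supported on the consistent configurations, so the averaging bound holds uniformly over all placement strategies the lawyer might adopt. This is what makes the conclusion robust, and it is also the attractive feature of the argument: rather than examining each indiscreet strategy individually, the counting identity $\sum_c p_c = f$ settles them all at once.
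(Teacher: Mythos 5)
Your proof is correct and follows the same basic route as the paper's: an indiscreet strategy reveals some coin's authenticity, and that revelation pushes the judge's best single-coin guess above the baseline $f/t$. In fact your write-up is more complete than the paper's two-sentence argument, which simply asserts that revealing a coin ``increases the judge's ability'' without justifying the nontrivial case of a coin revealed to be \emph{real}; your counting identity $\sum_c p_c = f$ together with the pigeonhole bound $\max_{c \ne x} p_c \ge f/(t-1) > f/t$ supplies exactly the missing step, and correctly notes that it holds for any placement distribution the lawyer uses.
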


\begin{proof}
At least one of the coins will be revealed to be genuine or fake. This increases the judge's ability to guess a fake coin.
\end{proof}

On the other hand, if $t$ is large, $f$ and $d$ are small, and $f$ doesn't divide $d$, there exists an indiscreet strategy that proves that there must be $f$ fake coins, and the judge's ability to guess one fake coin is very close to $f/t$. This strategy is a generalization of Strategy~\ref{str:2} (80-3-2):

\begin{customstr}{2*}\label{str:2*}
The lawyer divides all coins into $f$ piles containing one fake coin each, along with some leftover coins so that the number of leftover coins is less than $f$. The lawyer shows that each large pile weighs the same. After that she compares the leftover coins to each other and to some other coins if necessary to demonstrate that there are at least $d+1$ coins that weigh the same and this set of coins contains the leftover coins. 
\end{customstr}

This shows that the leftover coins have to be real. This also proves that all the fake coins are in the $f$ large piles, with each pile containing exactly one fake coin. 

For this procedure, the judge borrows at most $d+1-f$ coins from all of the larger piles and shows that they are genuine. This leaves piles of size at least $\lfloor \frac{t}{f} \rfloor - \lceil \frac{d}{f} \rceil$ that contain at least one fake coin. This means that for $f$ not dividing $d$, we can create a strategy that gives the judge a probability of $ \dfrac{1}{\lfloor \frac{t}{f} \rfloor - \lceil \frac{d}{f} \rceil} $ of guessing a single fake coin. The probability of guessing one fake coin with this strategy clearly approaches $f/t$ as $t$ grows larger.

Note that in this strategy the lawyer reveals $d+1$ coins to be real. Depending on the values of $t$, $f$, and $d$, the lawyer can sometimes tweak the strategy to reveal the privacy of fewer coins.

\section{Conclusion}

Do fake coins really need a lawyer's protection in the courtroom? Most likely not. But mathematicians make a living by reducing difficult problems to easier, more manageable ones. In short, our discussion demonstrates that collecting aggregated information from databases reveals some additional information in the process.

As an illuminating example, let's say you filled in an anonymous survey about your taste in movies. Although your opinions are very unpopular, you feel safe because you never once mentioned your name. The surveyors publish the results and mention the curious fact that people who live in towns starting with W all hate \textit{Star Trek}; now you're in real trouble. Your friends, your spouse, and even your dog know that you filled in the survey and now realize that you have been lying to them for all these years. 

Although the above example doesn't use coins, we have conclusively shown that aggregated data collection decreases your privacy; this paper is our attempt to quantify by how much.

\section{Acknowledgments}

We are grateful to the MIT-PRIMES program for allowing us to conduct this research.

\end{document}